\newtheorem{theorem}{Theorem}[section]
\newtheorem{lemma}[theorem]{Lemma}
\theoremstyle{definition}
\newtheorem{definition}[theorem]{Definition}
\newtheorem{example}[theorem]{Example}
\theoremstyle{remark}
\newtheorem{remark}[theorem]{Remark}
\newcommand{\ga}{{\bf g}}
\newcommand{\Cyc}{{\mathcal C}}
\newcommand{\C}{\mathbb{C}}
\newcommand{\Z}{\mathbb{Z}}
\newcommand{\N}{\mathbb{N}}
\renewcommand{\P}{\mathbb{P}}
\newcommand{\PP}{\mathbb{P}}
\newcommand{\GG}{\mathbb{G}}
\begin{document}
\title[Polar maps for Hypersurfaces]{Images of the Polar maps for Hypersurfaces}
\date{\today}
\author[L. \,E. Lopez]{Luis E. Lopez}
\address{Max-Planck-Institut f\"ur Mathematik\\
         Vivatsgasse 7\\
	 D-53111 Bonn \\
	 Germany
}
\curraddr{Max-Planck-Institut f\"ur Mathematik\\
         Vivatsgasse 7\\
	 D-53111 Bonn \\
	 Germany
}
\email{llopez@mpim-bonn.mpg.de}
\thanks{This work is based on part of the author's dissertation. It is a pleasure to thank H. Blaine Lawson for all
his guidance and helpful remarks.}
\subjclass[2000]{Primary: 14N05; Secondary: 55R25}
\keywords{Polar map, Gauss map, line bundles}
\begin{abstract} For a projective hypersurface $X \subset \P^n$, the images of the polar maps of 
degree $k$ are studied. The cohomology class defined by these
maps is calculated and classical results on dual varieties are presented as applications.
\end{abstract}
\maketitle


\section{Introduction}
For any smooth variety $X \subset \PP^n$ of 
codimension $q$ the classical  gauss map is the map 
$$g^1:X \rightarrow \GG^q(\PP^n)$$ 
which associates to each point $\xi$
the projective linear subspace of codimension $q$ tangent to $X$ at $\xi$ in $\PP^n$:
$$\xi \mapsto \overline{T_{\xi} X}$$

If $X$ is a hypersurface defined by the set of zeros of a homogeneous polynomial $F$ of degree $d$, 
$X=V(F):=\{x \in \PP^{n} \mid F(x)=0\}  \subset \PP^{n}$, 
then the gauss map has the following coordinate expression
\begin{equation}\label{eq:one}
\xi \mapsto V \left( \sum \frac{\partial F}{\partial x_i}(\xi) {x}_i \right) 
\end{equation}

If $X$ has singularities we no longer have a map which is regular, but only a rational
map. \\

We may think of the Gauss map in the following way: For a fixed point $p \in X$ we associate
an algebraic cycle  in  $\P^n$ of degree one which is obtained via the first partial derivatives
of the polynomial defining $X$. If we take higher derivatives of this polynomial, then it is possible to associate
to every point $p \in X$ an algebraic cycle of higher degree. For a fixed point $p$, this cycle has
appeared in different contexts, we will mention two of them

\subsection{Polar Transformations and Homaloidal Polynomials}

The map  (\ref{eq:one}) has a natural generalization taking higher derivatives.
The following definition (c.f. ~\cite{dolga}, ~\cite{cili}) suggests such a generalization.

\begin{definition} Let $\mathbf{p}=(p_0,\ldots,p_n) \in \C^{n+1} \setminus \{0\}$, and let
$p=[p_0:\ldots:p_n]$ be the corresponding point in $\P^n$. For every positive integer $k<d$
the {\em $k$-th polar polynomial of $F$} is given by
\begin{equation}
\Delta_{\mathbf{p}}^s(F)(x):= \left(p_0\frac{\partial}{\partial x_0}+\cdots+p_n\frac{\partial}{\partial x_n}\right)^{(s)}F(x)
\end{equation}
where $(s)$ denotes the symbolic power taking derivatives and products.
\end{definition}

With this notation, the {\em reciprocity theorem} proved in ~\cite{dolga} implies that 
the gauss map is given by
$$\xi \mapsto V\left(\Delta_{\xi}^{(d-1)}(F)(x)\right).$$

Furthermore, the image of the (rational) Gauss map is the {\em dual} (or polar) variety of $X$, an object which has
been thoroughly studied in classical algebraic geometry.

\subsection{Higher Fundamental Forms}

In ~\cite{behesh} (c.f. also ~\cite{GH} and ~\cite{Lands}), Beheshti  defined for a smooth point $p \in X$ the hypersurfaces $Y^k_p$ 
as the zero set of the polynomial

\begin{equation}
\sum_{0 \leq i_1,\ldots,i_k \leq n} \frac{\partial^k P}{\partial x_{i_1}\cdots \partial x_{i_k}}(p)x_{i_1}\cdots x_{i_k}
\end{equation}

The hypersurface $Y^1_p$ is the tangent plane at $p$ and the restriction of the hypersurface $Y^2_p$ to $T_p$ is a quadratic
form: the second fundamental form.\\

In this paper we will study the properties of the map
$$g^k: X \rightarrow \Cyc^1_k(\PP^n)$$
which associates to each point $\xi$ the effective algebraic cycle of degree $k$ and codimension
$1$ which approximates $X$ at $\xi$. \\

As in the case of the Gauss map, the higher degree polar  maps are only rational in general. Interestingly
however,  they can still be regular in the presence of certain singularities. 

\section{Basic Properties}

Let us recall the Euler relation:
\begin{equation}
\label{euler}
d \cdot F = \sum_{i=0}^n \frac{\partial F}{\partial x_i} x_i
\end{equation}

Iterating the  Euler relation we get  the following:

\begin{equation}
\label{euler1}
(d-1) \cdot \frac{\partial F}{\partial x_i} = \sum_{k=0}^n 
\frac{\partial^2 F}{\partial x_k \partial x_i} x_k
\end{equation}

If we substitute equation (\ref{euler1}) into equation (\ref{euler}) we get the following
relation for $F$:
\begin{equation}
d \cdot (d-1) \cdot F = \sum_{i=0}^n  \sum_{k=0}^n \frac{\partial^2 F}{\partial x_k \partial x_i} x_k x_i
\end{equation}

In general, if $ s \leq d=\deg(F)$ we have the following equation:
\begin{equation}
\boxed{d(d-1)\cdots (d-s+1)F =  \sum_{|\alpha|=s} \frac{\partial^{|\alpha|}F}{\partial x^{\alpha}}  x^{\alpha}}
\end{equation}
where $\alpha$ runs over all multi-indices of length $s$, i.e. 
$$ \alpha =(\alpha_0,\ldots,\alpha_n) \: \text{with} \: \alpha_i \in \N$$
and 
$$|\alpha|=\alpha_0 +\cdots+\alpha_n$$

$$
\frac{\partial^{|\alpha|}F}{\partial x^{\alpha}} = \frac{\partial^{|\alpha|}F}{\partial x_{\alpha_0} \partial x_{\alpha_1} \cdots \partial x_{\alpha_n}}
$$

\begin{remark} One of the consequences of the Euler formula is that the systems
$\{\frac{\partial F}{\partial x_0},\ldots,\frac{\partial F}{\partial x_n},F\}$
and
$\{\frac{\partial F}{\partial x_0},\ldots,\frac{\partial F}{\partial x_n}\}$
have the same set of solutions, more precisely, they define the same scheme since the
ideals they generate are equal. Recursively we
obtain the following lemma.
\end{remark}
\begin{lemma} \label{workhorse} Let $\xi$ be a point in $\P^n$ such that
$\frac{\partial^s F}{\partial x_{\alpha}}(\xi)=0$ for all $\alpha$ with
$|\alpha|=s$. Then $F(\xi)=0$ and $\frac{\partial^{|\beta|} F}{\partial
x^{\beta}}(\xi)=0$ for all $\beta$ with $|\beta| \leq s$ 
\end{lemma}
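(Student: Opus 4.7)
The plan is to apply the boxed Euler-type identity not to $F$ itself, but to each of its lower order partial derivatives. Since we work in characteristic zero (over $\C$) and $s\le d$, the leading coefficients appearing in that identity will never vanish, which is the only potential obstacle; once that is noted, the argument is essentially one line per derivative and no induction is strictly needed.

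More precisely, fix any multi-index $\beta$ with $|\beta|=t<s$, and set $G:=\frac{\partial^{|\beta|}F}{\partial x^{\beta}}$. Then $G$ is a homogeneous polynomial of degree $d-t$. Applying the boxed identity to $G$ with exponent $s-t$ (which is legitimate provided $s-t\le d-t$, i.e.\ $s\le d$) yields
\begin{equation*}
(d-t)(d-t-1)\cdots(d-s+1)\,G(x) \;=\; \sum_{|\gamma|=s-t}\frac{\partial^{s-t}G}{\partial x^{\gamma}}(x)\,x^{\gamma}.
\end{equation*}
But $\frac{\partial^{s-t}G}{\partial x^{\gamma}}=\frac{\partial^{s}F}{\partial x^{\beta+\gamma}}$ and $|\beta+\gamma|=s$, so by hypothesis every derivative on the right vanishes at $\xi$. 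The product $(d-t)(d-t-1)\cdots(d-s+1)$ is a product of positive integers (since $0\le t<s\le d$) and hence nonzero, so $G(\xi)=0$, i.e.\ $\frac{\partial^{|\beta|}F}{\partial x^{\beta}}(\xi)=0$. Taking $\beta=0$ yields $F(\xi)=0$ as the special case $t=0$.

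Thus the full conclusion drops out directly from the boxed identity applied degree-by-degree to each partial of $F$, with no iteration required. The only step that needs a moment of care is verifying that the numerical coefficient on the left is nonzero; this is the reason the lemma requires (implicitly) that $s\le d$ and that we are in characteristic zero.
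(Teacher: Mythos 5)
Your proof is correct and follows essentially the same route as the paper, which obtains the lemma by recursively applying the Euler relation to the partial derivatives of $F$; you simply package that recursion by invoking the already-iterated (boxed) Euler identity for each $G=\frac{\partial^{|\beta|}F}{\partial x^{\beta}}$ at once. Your explicit remarks that the numerical coefficient $(d-t)\cdots(d-s+1)$ is nonzero and that $s\le d$ is implicitly required are correct and slightly more careful than the paper's one-line justification.
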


Now we define the higher degree polar  maps and we derive some consequences
from the generalized Euler formulas given above.

\begin{definition} For every $k \leq d$, the {\em degree $k$ polar map} 
\begin{equation}
\xymatrix{
g^k: X \ar@{-->}[r] & \Cyc^1_k(\P^n) 
}
\end{equation}
is the rational map defined by
$$
\xi \mapsto V\left(\sum_{|\alpha|=k} \frac{\partial^k F}{\partial x_{\alpha}}(\xi)x^{\alpha} \right)
$$
The space $\Cyc^1_k(\P^n)$ of cycles of codimension $1$ and degree $k$ in $\P^n$ can be identified
with $\P^{{n+k \choose k} -1}$, this identification is via the Chow coordinates. Every codimension $1$ 
cycle is determined by a multivariable homogeneous polynomial of degree $k$. If a cycle is 
defined by a polynomial $\sum a_{\alpha} x^{\alpha}$ then its Chow coordinates are
$\left[a_0:\cdots: a_{\alpha}:\cdots \right]$. Using the Chow coordinates in $\Cyc^1_k(\P^n) 
\cong \P^{{n+k \choose k} -1}$ the degree $k$ Gauss map is just 
$$
\xi \mapsto \left[\frac{\partial^k F}{\partial x_0 \cdots \partial x_0}(\xi):\ldots
:\frac{\partial^k F}{\partial x^{\alpha}}(\xi):\ldots:\frac{\partial^k F}
{\partial x_n \cdots \partial x_n}\right]
$$
\end{definition}

As it was remarked in the introduction, the first polar map $g^1$ coincides with the
classical projective gauss map. \\

The higher degree polar maps are only rational in general. Interestingly
however,  they can still be regular in the presence of certain singularities. More precisely, the  following is true:

\begin{theorem} If a hypersurface of degree $d$ has a regular polar map of degree $p$, it also
has regular polar maps of degree $q$ for $p \leq q \leq d$
\end{theorem}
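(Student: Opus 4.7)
The map $g^k$ is given in Chow coordinates by the vector of all $k$-th order partial derivatives of $F$, so it is regular at a point $\xi \in X$ precisely when not all $k$-th partials of $F$ vanish at $\xi$. Writing
\[
B_k := \{\xi \in \P^n : \tfrac{\partial^k F}{\partial x^\alpha}(\xi) = 0 \text{ for all } |\alpha| = k\},
\]
regularity of $g^k$ on $X$ is equivalent to $B_k \cap X = \emptyset$. My plan is therefore to show that the family $\{B_k\}$ is decreasing in $k$, that is, $B_q \subseteq B_p$ whenever $p \leq q \leq d$. Once this is established the theorem follows at once: if $g^p$ is regular then $B_p \cap X = \emptyset$, hence $B_q \cap X \subseteq B_p \cap X = \emptyset$, and $g^q$ is regular for every $p \leq q \leq d$.

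\medskip

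The containment $B_q \subseteq B_p$ is a direct application of Lemma~\ref{workhorse}. Indeed, if $\xi \in B_q$, i.e.\ all $q$-th order partials of $F$ vanish at $\xi$, then the lemma (applied with $s = q \leq d$) gives that every partial of $F$ of order at most $q$ vanishes at $\xi$; in particular the $p$-th order partials vanish, so $\xi \in B_p$. As a bonus the lemma also yields $F(\xi) = 0$, so $B_k \subseteq X$ automatically and the intersection with $X$ in the previous paragraph is superfluous.

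\medskip

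There is no real obstacle in this argument; the substance of the theorem is the observation that the iterated Euler relations derived just before Lemma~\ref{workhorse} encode exactly the monotonicity of base loci that the theorem requires. The only non-formal step is recognizing that Lemma~\ref{workhorse} is precisely the statement needed, after which the implication is tautological.
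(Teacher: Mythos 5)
Your argument is correct and is essentially identical to the paper's own proof: both reduce the statement to the contrapositive observation that vanishing of all $q$-th partials at a point forces vanishing of all $p$-th partials via Lemma~\ref{workhorse}. Your additional remark that $B_k \subseteq X$ (so the intersection with $X$ is automatic) is a harmless refinement the paper does not spell out.
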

\begin{proof}
This follows immediately from the Euler relation: If the degree $p$ polar map is regular, then for
every $\xi \in X$
some $p$-th partial derivative $\frac{\partial^p F}{\partial x_{\alpha}}(\xi)$ is not zero. On the
other hand,  if all
the $q$-th partial derivatives are zero at $\xi$ then lemma (~\ref{workhorse}) implies
that  $\frac{\partial^p F}{\partial x^{\alpha}}(\xi) = 0$  for  all $\alpha$ !
\end{proof}

\begin{example}
Let $V \subset \PP^2$ be the nodal plane cubic defined by 
$F(x_0,x_1,x_2)=x_2x_1^2-x_0^3-x_0^2x_2$, then $V$ does
not have a regular polar map of degree $1$, but it has a well defined
polar map of degree $2$ (see figure \ref{fig:nodalcubic}):
$$
\overline{\xi} \mapsto V_{\overline{\xi}}=V(-(3\xi_0+\xi_2)x_0^2-2\xi_0x_0x_2+\xi_2x_1^2+2\xi_1x_1x_2)
$$
\begin{figure}[h]
\centering
\includegraphics[scale=0.09]{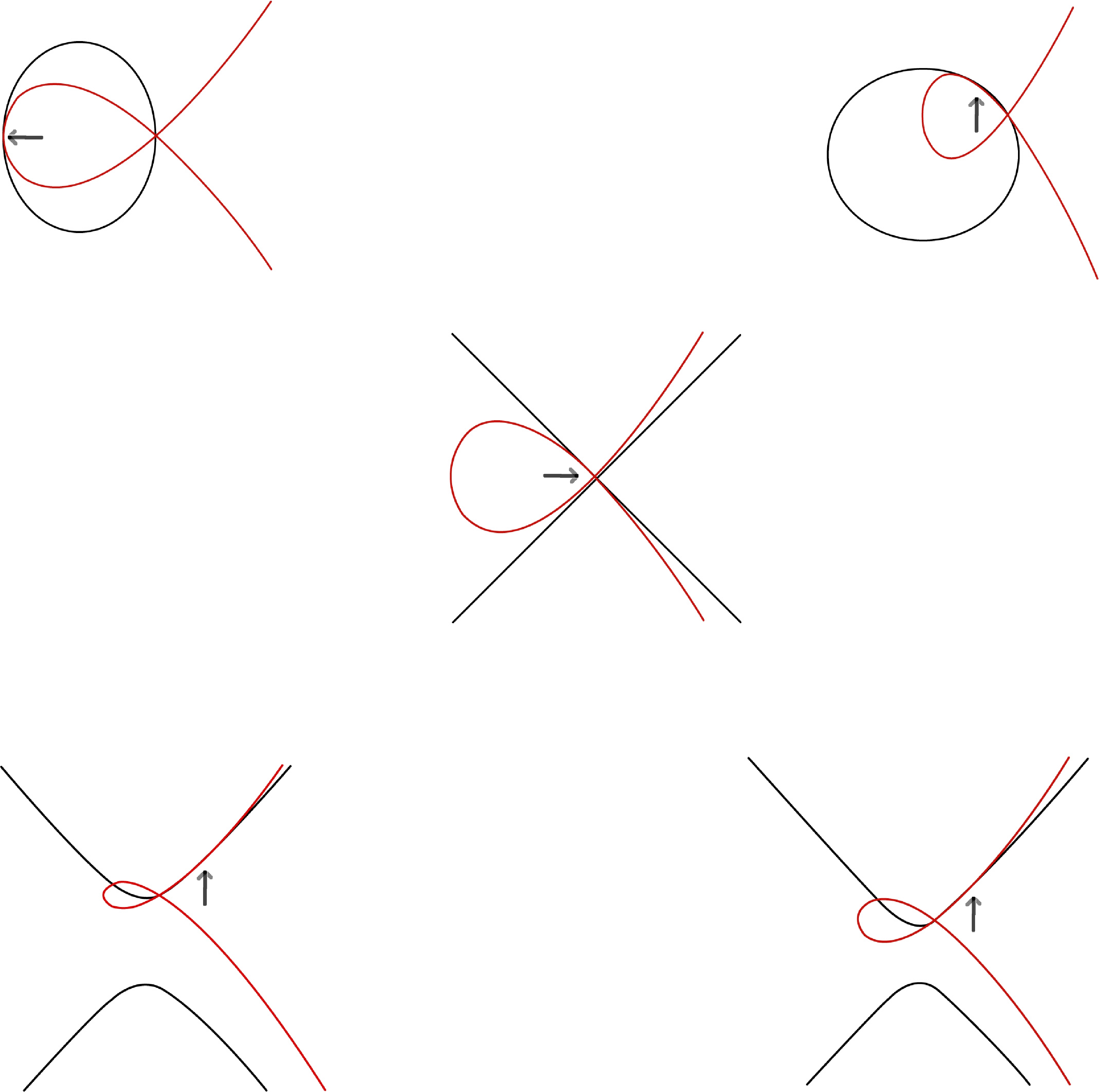}
\caption{Second degree approximations to the nodal cubic: The red curve is the nodal cubic,
the black curves are the conics approximating the curve at the point signaled by the arrow. }
\label{fig:nodalcubic}
\end{figure}

That is, to every point $v$ we associate a quadric which approximates the curve at $v$.
Notice that at the node $[0:0:1]$ we do have a well defined second
order approximation: the union of the two possible tangents.
\end{example}
\section{Degree and Dimension}
The closure of the image of the first polar map defines the dual variety of the hypersurface $X$.
We could ask what are the general properties of the images of these higher degree polar maps.
The following results extend some classic results of projective geometry. 

\begin{theorem}
\label{cone_char}
 If $X$ is not a cone, then  the $(d-1)$ polar map $g^{(d-1)}$is an isomorphism from
$X$ into its image. (This extends the well known result for the duals of smooth quadrics. Note
that any singular quadric is a cone.) 
\end{theorem}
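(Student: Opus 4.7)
The plan is to realize $g^{d-1}$ as the restriction to $X$ of a linear map of projective spaces, and then show that this linear map is actually a closed immersion of $\mathbb{P}^{n}$.

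Since $F$ has degree $d$, every $(d-1)$-th partial derivative $L_{\alpha}:=\partial^{\alpha}F/\partial x^{\alpha}$ with $|\alpha|=d-1$ is a \emph{linear} form on $\mathbb{C}^{n+1}$. Hence, in the Chow coordinates on $\Cyc^{1}_{d-1}(\mathbb{P}^{n})\cong\mathbb{P}^{N}$ (where $N=\binom{n+d-1}{d-1}-1$), the assignment
\[
\tilde{g}\colon \mathbb{P}^{n}\dashrightarrow \mathbb{P}^{N},\qquad \xi\mapsto[L_{\alpha}(\xi)]_{|\alpha|=d-1}
\]
is given by linear forms in $\xi$, and $g^{d-1}=\tilde{g}|_{X}$. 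Thus the statement will follow if we show that, when $X$ is not a cone, the forms $\{L_{\alpha}\}_{|\alpha|=d-1}$ span $(\mathbb{C}^{n+1})^{\ast}$: for then $\tilde{g}$ has empty base locus and factors as a linear injection $\mathbb{C}^{n+1}\hookrightarrow\mathbb{C}^{N+1}$, so its projectivization is a closed linear embedding of $\mathbb{P}^{n}$ into $\mathbb{P}^{N}$, whose restriction to $X$ is automatically an isomorphism onto the image.

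The heart of the argument is therefore to prove: if $\zeta\in\mathbb{C}^{n+1}\setminus\{0\}$ satisfies $L_{\alpha}(\zeta)=0$ for every $|\alpha|=d-1$, then $X$ is a cone with apex $[\zeta]$. By Lemma~\ref{workhorse}, such a $\zeta$ satisfies $\partial^{\beta}F(\zeta)=0$ for \emph{every} multi-index $\beta$ with $|\beta|\le d-1$. Expanding $F$ in a Taylor series around $\zeta$,
\[
F(\zeta+v)=\sum_{k=0}^{d}\frac{1}{k!}\sum_{|\alpha|=k}\binom{k}{\alpha}\partial^{\alpha}F(\zeta)\,v^{\alpha},
\]
only the top-order term survives, and since the $d$-th derivatives of $F$ are constants independent of the basepoint, that term equals $F(v)$ itself. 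So $F(\zeta+v)=F(v)$ as polynomials in $v$. Substituting $v=tw$ and comparing degrees in $t$ gives $F(\zeta+tw)=t^{d}F(w)$ for every $w\in\mathbb{C}^{n+1}$ and every $t\in\mathbb{C}$; in particular, whenever $[w]\in X$ the whole affine line through $\zeta$ in the direction $w$ lies in $\{F=0\}$. Projectively, the line in $\mathbb{P}^{n}$ joining $[\zeta]$ to any $[w]\in X$ is contained in $X$, i.e.\ $X$ is a cone with apex $[\zeta]$, contrary to hypothesis.

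The main obstacle I expect is organizing this Taylor-series step so that it remains purely polynomial (avoiding any analytic convergence worries) and yields the cone conclusion in projective terms. Once that is cleanly done, the rest is formal: the absence of base points for $\tilde{g}$ is equivalent to the spanning of $(\mathbb{C}^{n+1})^{\ast}$ by the $L_{\alpha}$, which makes $\tilde{g}$ a linear embedding of $\mathbb{P}^{n}$, and hence $g^{d-1}=\tilde{g}|_{X}\colon X\to g^{d-1}(X)$ is an isomorphism onto its image, as claimed.
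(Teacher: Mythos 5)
Your proposal is correct and follows essentially the same route as the paper: both exploit that $g^{d-1}$ is linear in Chow coordinates and reduce to showing that a nonzero common zero $\zeta$ of all the $(d-1)$-th partials (which, by Lemma~\ref{workhorse}, kills all lower-order derivatives at $\zeta$ as well) would force $X$ to be a cone with vertex $[\zeta]$. The only cosmetic difference is that the paper invokes its separate lemma characterizing cones via a point of multiplicity $d$, whereas you re-derive the cone property directly from the Taylor identity $F(\zeta+v)=F(v)$, and you spell out slightly more explicitly why an injective linear map projectivizes to a closed embedding.
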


In order to prove this theorem we will first prove a characterization of cones. Recall that 
a cone is the linear join $Y \# p$ of a variety $Y \in \P^n$ with a point $p \in \P^n$ such that
$p \notin Y$.

\begin{lemma} $X \subset \P^n$ is a cone if and only if  there is some $\xi \in X$ such that
${\rm mult}_{\xi}X=d$ (recall that $X$ is a hypersurface of degree $d$).
\end{lemma}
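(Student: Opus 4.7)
My plan is to reduce both directions to an explicit coordinate description of the defining polynomial by moving the distinguished point to $[0:\ldots:0:1]$.

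For the ``if'' direction, assume ${\rm mult}_\xi X = d$. After a projective linear change of coordinates, I may take $\xi = [0:\ldots:0:1]$. The multiplicity of $X$ at $\xi$ is, by definition, the order of vanishing at the origin of the dehomogenization $f(y_0,\ldots,y_{n-1}) := F(y_0,\ldots,y_{n-1},1)$, where $y_i = x_i/x_n$. Since $\deg F = d$, the polynomial $f$ has total degree at most $d$; since its multiplicity at the origin is $d$, every monomial appearing in $f$ must have degree exactly $d$. This forces $f$ to be homogeneous of degree $d$ in $y_0,\ldots,y_{n-1}$, which in turn forces $F$ to be independent of $x_n$. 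Hence $X = V(F)$ is the join of the hypersurface $\{F = 0\} \subset \P^{n-1} = \{x_n = 0\}$ with the point $\xi$, i.e.\ a cone with apex $\xi$.

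For the ``only if'' direction, suppose $X = Y \# p$ is a cone with apex $p$, and move $p$ to $[0:\ldots:0:1]$. By definition of the join, the line through $p$ and any point of $Y$ lies entirely in $X$; equivalently, $F(x_0,\ldots,x_{n-1},x_n) = F(x_0,\ldots,x_{n-1},0)$, so $F$ is a polynomial in $x_0,\ldots,x_{n-1}$ only, homogeneous of degree $d$. Its dehomogenization at $p$ is then itself homogeneous of degree $d$ in $y_0,\ldots,y_{n-1}$, which has multiplicity exactly $d$ at the origin, so ${\rm mult}_p X = d$.

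The argument is essentially one of book-keeping, and the only mildly subtle point is making sure the two notions of ``cone'' agree: the synthetic definition as a linear join $Y \# p$ with $p \notin Y$, and the algebraic characterization that a suitable defining equation drops one variable in coordinates where the apex is $[0:\ldots:0:1]$. Once this translation is in place, both implications follow from the elementary fact that a homogeneous polynomial of degree $d$ has multiplicity $d$ at the origin, while a nonhomogeneous polynomial of degree $\leq d$ has strictly smaller multiplicity. The main thing to be careful about is confirming that when ${\rm mult}_\xi X = d$, the point $\xi$ actually lies on $X$ and not merely on its ambient $\P^n$; this is automatic, since a point not on $X$ has multiplicity $0$, and for $d \geq 1$ we have $d > 0$.
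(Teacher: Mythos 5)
Your proof is correct, but it follows a genuinely different route from the paper's. The paper argues synthetically: for the converse it takes an arbitrary line $l$ through a point $\xi$ with ${\rm mult}_{\xi}X=d$ and observes that if $l$ meets $X$ at a second point, the local intersection multiplicities along $l$ already sum to more than $d=\deg X$, so $l\subset X$; hence $X$ is swept out by lines through $\xi$ and is a cone with apex $\xi$. The forward direction is simply asserted there. You instead normalize the distinguished point to $[0:\cdots:0:1]$ and read both implications off the expansion $F=\sum_{j}F_j(x_0,\ldots,x_{n-1})\,x_n^{d-j}$ with $F_j$ homogeneous of degree $j$: multiplicity $d$ at the origin of the dehomogenization kills every $F_j$ with $j<d$, so $F=F_d$ is free of $x_n$ and $X$ is the cone over $V(F_d)\cap\{x_n=0\}$ with apex $\xi$, and conversely. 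Your version has the advantage of actually proving the forward implication and of making explicit the useful dictionary ``cone with apex $p$ $\Leftrightarrow$ the defining equation drops a variable once $p=[0:\cdots:0:1]$''; the paper's version avoids coordinates entirely and lets the degree bound do all the work. The one step you pass over quickly is the word ``equivalently'' in your only-if direction: the join $Y \# p$ is defined set-theoretically, so concluding that the polynomial $F$ itself (not merely its zero locus) is independent of $x_n$ needs a short argument when $F$ is not assumed reduced --- e.g.\ note that each irreducible factor of $F$ cuts out a cone with the same apex, and for an irreducible $P=\sum_j h_j x_n^j$ the coefficients $h_j$ with $j\ge 1$ lie in $(P)$ and have degree $<\deg P$, hence vanish. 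This is standard and easily supplied, so I would not call it a gap, but a careful writeup should include it.
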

\begin{proof}
If $X$ is a cone, $X= Y \# p$ then $p$ is a point such that ${\rm mult}_pX= d$. Conversely, 
if $\xi$ is a point of  multiplicity $d$, let  $l$ be any line passing through $\xi$. 
If $l$ intersects $X$ at any other point $q$ then $\overline{\xi q}=l \subset X$ (otherwise 
the degree of $X$ would be greater than $d$).
\end{proof}

Now we can prove theorem \ref{cone_char}.
\begin{proof}
Notice that using Chow coordinates, $g^{(d-1)}$ is a linear map. That is, it is the map induced
by a linear map $\tilde{L}:\P^n \rightarrow \P^{{n+k \choose k} -1}$ by restriction to $X$. This
means that  $\tilde{L}$ itself is
induced by a linear map $L: \C^{n+1} \rightarrow \C^{n+k \choose k}$. If $g^{(d-1)}$ is not
an injection then $L$ is certainly not an injection. We will show that this leads to a 
contradiction. \\

If $L$ is not an injection then it has a non-zero kernel. Let $\bar{\xi}$ be a non-zero vector in
that kernel. This means that all the $(d-1)$-partial derivatives of $F$ vanish at $\bar{\xi}$. But
the generalized Euler relation implies then that $\xi \in X$. This is a contradiction
because we get a point $\xi \in X$ such that ${\rm mult}_{\xi}X=d$ (because all the $(d-1)-$partial
derivatives vanish at $\xi$), i.e. $X$ is a cone (by
the previous lemma). 

\end{proof}

\begin{lemma}\label{classcalculation} If the degree $p$ polar Map is regular, then 
$$(g^p)^*(O(1))= O_X(d-p)$$
\end{lemma}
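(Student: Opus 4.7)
The plan is to identify $g^p$ as the map associated to the linear system spanned by the partial derivatives $\partial^p F/\partial x^\alpha$, and then apply the standard correspondence between morphisms to projective space and basepoint-free linear systems on a line bundle.

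First I would observe that for each multi-index $\alpha$ with $|\alpha|=p$, the polynomial $\partial^p F/\partial x^\alpha$ is homogeneous of degree $d-p$ in $x_0,\ldots,x_n$. Thus each coordinate of $g^p$ is (the restriction to $X$ of) a section of $\mathcal{O}_{\mathbb{P}^n}(d-p)$, and the collection of these sections restricts to a set of sections $\{s_\alpha\}_{|\alpha|=p}$ of $\mathcal{O}_X(d-p)$.

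Next I would invoke the general principle: given a line bundle $L$ on $X$ and sections $s_0,\ldots,s_N \in H^0(X,L)$ with no common zero, the formula $\xi \mapsto [s_0(\xi):\cdots:s_N(\xi)]$ defines a morphism $\varphi:X \to \mathbb{P}^N$ with $\varphi^*\mathcal{O}(1)=L$, and $s_i=\varphi^*(z_i)$ under this isomorphism. In our setting, the assumption that $g^p$ is regular is exactly the statement that the $s_\alpha$ have no common zero on $X$; hence the hypothesis does the work of verifying basepoint-freeness.

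Applying this principle directly with $L=\mathcal{O}_X(d-p)$, $N=\binom{n+p}{p}-1$, and $\{s_\alpha\}$ as above yields
\begin{equation*}
(g^p)^*\mathcal{O}(1) \;=\; \mathcal{O}_X(d-p).
\end{equation*}
There is essentially no obstacle beyond bookkeeping: the only nontrivial input is that regularity of $g^p$ supplies the basepoint-free condition, which is built into the hypothesis. One should, for completeness, note that the sections $s_\alpha$ are not all identically zero on $X$ (so the linear system is nonempty), but this also follows from regularity.
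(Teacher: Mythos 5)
Your proof is correct, and it takes a mildly but genuinely different route from the paper's. Both arguments rest on the same core observation---that the coordinate functions of $g^p$ are the polynomials $\partial^p F/\partial x^\alpha$, homogeneous of degree $d-p$---but the paper first extends $g^p$ to a regular map $\tilde{g}^p$ on all of $\P^n$, invoking Lemma~\ref{workhorse} (the iterated Euler relation) to show that a common zero of all the $p$-th partials would lie on $X$ and hence contradict regularity of $g^p$; it then reads off $(\tilde{g}^p)^*(O(1))=O_{\P^n}(d-p)$ upstairs and restricts to $X$. You instead restrict the sections to $X$ from the start and apply the standard correspondence between basepoint-free linear systems and morphisms to projective space directly on $X$, with the regularity hypothesis supplying basepoint-freeness verbatim; no Euler-relation input is needed. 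Your route is the more economical one for this particular lemma. What the paper's detour buys is the independently useful fact that a regular polar map automatically extends to a morphism on the ambient $\P^n$, which is then exploited in the degree computation of Theorem~\ref{pullbackformula} (the pushforward--pullback manipulation there is phrased in terms of $\tilde{g}^p$). If you adopt your proof, that extension statement would need to be recorded separately before the degree calculation.
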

\begin{proof}
First of all, notice that if $g^p$ is regular, then it extends to a regular map defined
on {\em all} of $\P^n$:

$$\tilde{g}^p:\P^n \rightarrow \P^N$$

We see this as follows. Note that the extension is given by the same coordinate functions. 
The reason for having this regular extension is the following: If there is some point
$\xi \in \P^n$ where all the coordinate functions vanish simultaneously 
lemma (~\ref{workhorse}) then implies that $F(\xi)=0$, i.e. $\xi \in X$, but this
would imply that $g^p$ is not regular! \\

Now, since the coordinate functions of $\tilde{g}^p$ can be interpreted as sections
of the bundle $(\tilde{g}^p)^*(O(1))$ and they are polynomials of degree $d-p$, we
get that the pullback of $O(1)$ under this map is $O(d-p)$. Since $g^p$ is just
the restriction of $\tilde{g}^p$  we get the lemma.

\end{proof}

\begin{theorem} \label{pullbackformula}
If the degree $p$ polar map is regular, the image of the $p$-th Gauss image
variety is $n-1$ and  the degree of the $p$-th Gauss image 
variety is $d(d-p)^{n-1}$. (This extends
the classical formulas for the degree of the dual of a smooth hypersurface and the fact that
the dual of a smooth hypersurface is a hypesurface).  
\end{theorem}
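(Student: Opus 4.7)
The plan is to deduce both assertions from the pullback formula of Lemma \ref{classcalculation} combined with a standard self-intersection computation on the hypersurface $X$.

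First I would set $L := (g^p)^* O(1) = O_X(d-p)$. Since $X \subset \P^n$ is a hypersurface of degree $d$, the top self-intersection of the hyperplane class restricted to $X$ equals $d$, so
$$
(c_1 L)^{n-1} = (d-p)^{n-1} \cdot d.
$$
This is the single computation that drives everything else.

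For the dimension statement, I would argue from positivity: because $p<d$ the number $(d-p)^{n-1}\cdot d$ is strictly positive, so $L$ is big on $X$. Equivalently, no power of $L$ can be the pullback of a line bundle from a proper subvariety of $\P^N$, so $g^p$ cannot contract $X$ to something of dimension $<n-1$. Hence $g^p$ is generically finite onto its image and $\dim g^p(X) = n-1$.

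For the degree, the projection formula gives
$$
(c_1 L)^{n-1} = (\deg g^p)\cdot \deg g^p(X),
$$
where $\deg g^p$ is the degree of the finite-to-one map $X \to g^p(X)$. Combining with the computation above,
$$
(\deg g^p)\cdot \deg g^p(X) = d(d-p)^{n-1}.
$$
The main obstacle — and the only step that truly goes beyond the pullback formula — is verifying that $\deg g^p = 1$, i.e. that $g^p$ is birational onto its image. For $p=1$ and smooth $X$ this is the classical reflexivity (biduality) theorem; for $p=d-1$ it is Theorem \ref{cone_char}. In the intermediate range I would seek a polar analogue of biduality, for instance by analyzing the differential of $g^p$ at a generic point of $X$ and showing that no positive-dimensional family of points of $X$ share the same degree-$k$ approximating cycle. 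Once birationality is in hand, the projection formula collapses to $\deg g^p(X) = d(d-p)^{n-1}$, which specializes to the classical degree of the dual when $p=1$.
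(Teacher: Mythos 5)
Your computation is essentially the paper's: both arguments rest on Lemma~\ref{classcalculation} to get $(g^p)^*O(1)=O_X(d-p)$, then pair $(d-p)^{n-1}H_{\P^n}^{n-1}$ against $[X]$ (which has degree $d$) and transfer the result to the image by the projection formula. Your bigness argument for $\dim g^p(X)=n-1$ is just the observation that this intersection number is nonzero, which is also all the paper uses, so that part is fine. Where you diverge is in honesty rather than method: you keep the factor $\deg g^p$ in the projection formula and flag the need to show it equals $1$, whereas the paper silently writes $(\tilde{g}^p)_*(X)=g^p(X)$, i.e.\ assumes $g^p$ is birational onto its image without comment.

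You are right that this is the crux, and you should know that the ``polar biduality'' you hope for is false in general, so the step cannot be closed as stated. For the Fermat hypersurface $F=\sum x_i^d$ the only nonvanishing $p$-th partials are the pure ones, so in Chow coordinates $g^p$ is the restriction to $X$ of $[x_0:\cdots:x_n]\mapsto[x_0^{d-p}:\cdots:x_n^{d-p}]$, which identifies points of $X$ differing by roots of unity. Concretely, for the smooth plane quartic $x_0^4+x_1^4+x_2^4=0$ the second polar map is regular and sends $[\xi]$ to $[\xi_0^2:\xi_1^2:\xi_2^2]$; it is generically $4$-to-$1$ onto the conic $\eta_0^2+\eta_1^2+\eta_2^2=0$, so $\deg g^2(C)=2$, not $4(4-2)=8$ --- although $4\cdot 2=8$, confirming your formula $(\deg g^p)\cdot\deg g^p(X)=d(d-p)^{n-1}$. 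So the quantity $d(d-p)^{n-1}$ is the degree of the image \emph{cycle} $(g^p)_*[X]$, and equals the degree of the image variety only under the additional hypothesis that $g^p$ is generically injective (true for $p=1$ by reflexivity and for $p=d-1$ by Theorem~\ref{cone_char}, but not in between). Your proposal correctly isolates the missing hypothesis; the fix is to add it or to restate the conclusion for the pushforward cycle, not to search for a proof that $\deg g^p=1$.
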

\begin{proof}
Using the previous lemma, this becomes just a chern class calculation. We will denote
with $H_{\P^s}$ the class of a hyperplane in $H^2(\P^s, \Z)$ and with $H_{\P^s}^k$ its $k$-fold cup
product. Recall that $\dim X = n-1$. Using the previous lemma we obtain:

$$ (g^p)^*(H_{\P^N})=(d-p)H_{\P^n}$$
therefore
$$ (g^p)^*(H_{\P^N}^{n-1})=(d-p)^{(n-1)}H^{(n-1)}_{\P^n}$$

If $\langle, \rangle$ denotes the Kronecker pairing, then the
following calculation proves the result:

\begin{eqnarray*}
\deg g^p(X) &= & \langle g^p(X), H_{\P^N}^{(n-1)} \rangle \\
            &= & \langle (\tilde{g}^p)_*(X),H_{\P^N}^{(n-1)} \rangle \\
            &= & \langle X, \tilde{g}^*(H_{\P^N}^{(n-1)}) \rangle \\
            &= & \langle X, (d-p)^{(n-1)} H_{\P^n}^{(n-1)} \rangle \\
            &= & d(d-p)^{(n-1)}
\end{eqnarray*}
\end{proof}

Lemma (~\ref{classcalculation})
also allows us to prove the following calculation.

\begin{theorem} 
\label{gaussclass}
If $X$ is a smooth hypersurface, the cohomology class defined by the $p$-th polar map
$[\ga^p] \in H^2(X)$ satisfies 
$$[\ga ^p]=\frac{d-p}{d-1}[\ga^1]=\frac{d-p}{d-1}c(N_X(-1))$$ 
More generally,
$$[\ga^p] = c_1(O_X(d-p))$$
\end{theorem}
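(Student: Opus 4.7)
The plan is to deduce this theorem directly from Lemma~\ref{classcalculation} together with the standard formula $N_X \cong O_X(d)$ for the normal bundle of a smooth hypersurface. Since $X$ is smooth, the classical Gauss map $g^1$ is regular, so the regularity-propagation theorem proved earlier in the paper implies that every polar map $g^p$ with $1 \leq p \leq d-1$ is regular as well. Hence Lemma~\ref{classcalculation} is available for every such $p$.

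First I would unwind the definition of $[\ga^p]$. By construction, $g^p$ is (the restriction to $X$ of) a map into the projective space $\P^N$ of Chow coordinates, and the cohomology class in question is the pullback of the hyperplane class, i.e. $[\ga^p] = (g^p)^*(H_{\P^N}) = c_1((g^p)^*O(1))$. Lemma~\ref{classcalculation} identifies this pullback bundle as $O_X(d-p)$, which gives the ``more general'' statement
\[
[\ga^p] \;=\; c_1(O_X(d-p)).
\]

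Next, since $c_1(O_X(k))$ is linear in $k$, this immediately yields $[\ga^p] = (d-p)\, c_1(O_X(1))$ and in particular $[\ga^1] = (d-1)\, c_1(O_X(1))$. Taking the ratio gives
\[
[\ga^p] \;=\; \frac{d-p}{d-1}\,[\ga^1],
\]
which is the first equality.

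Finally I would identify $[\ga^1]$ with $c_1(N_X(-1))$. For a smooth hypersurface $X \subset \PP^n$ of degree $d$ the normal bundle is the line bundle $N_X \cong O_X(d)$ (a standard consequence of the defining sequence $0 \to O_{\PP^n}(-d) \to O_{\PP^n} \to O_X \to 0$ restricted to $X$). Therefore $N_X(-1) = N_X \otimes O_X(-1) \cong O_X(d-1)$, and
\[
c_1(N_X(-1)) \;=\; (d-1)\, c_1(O_X(1)) \;=\; [\ga^1],
\]
completing the chain of equalities. There is no real obstacle here once Lemma~\ref{classcalculation} is in hand; the only nontrivial ingredient beyond that lemma is the normal bundle identification, which is classical.
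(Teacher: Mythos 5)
Your argument is correct and follows essentially the same route as the paper: Lemma~\ref{classcalculation} gives $[\ga^p]=c_1(O_X(d-p))$, and the identification $N_X\cong O_X(d)$ (which the paper obtains via adjunction, and you via the ideal sheaf sequence) yields $[\ga^1]=c_1(N_X(-1))$ and the ratio formula. The only difference is that the paper invokes the Lawson--Michelsohn theorem to justify that the class attached to the map into the cycle space $\Cyc^1_p(\P^n)$ equals $c_1((\ga^p)^*O(1))$, whereas you take this identification as the definition of $[\ga^p]$ --- a reasonable reading given that the paper never defines the class otherwise, and your added remark that regularity of $g^p$ follows from smoothness via the regularity-propagation theorem makes explicit a hypothesis the paper leaves implicit.
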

\begin{proof}
The fundamental result of Lawson and Michelsohn in ~\cite{LM} implies that the homotopy
class $[\ga^p]$ coincides with $c_1((\ga^p)^*(O(1))$. Lemma (~\ref{classcalculation})
provides this last calculation:
$$c_1((\ga^p)^*(O(1)))=c_1(O_X(d-p))$$
The first claim is a consequence of the adjunction formula:
$$O_X(d)= [X]|_X= N_X$$
But we can also write
$$O_X(d)= O_X(d-1) \otimes O_X(1)=[g^1] \otimes O_X(1)$$
therefore
$$[g^1]=N_X(-1)$$
\end{proof}

\section{Examples and Applications}
The higher degree polar maps encode information about the underlying variety. For
example if the $p$-th polar map is regular then the variety cannot have singularities
of order $p$. The next theorem recovers the classical calculation of the number
of flexes of a smooth plane curve. Recall that a flex of a plain curve $C$ is a point
$p \in C$ where the tangent line has contact of order greater than $2$, that is,
the local intersection number at $p$ of the tangent line at $p$ and the curve is
greater than $2$.
\begin{theorem} Let $C$ be a smooth plane curve $C \subset \P^2$ of degree
$d \geq 2$. Then $C$ has $3d(d-2)$ flexes.
\end{theorem}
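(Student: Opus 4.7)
The plan is to realize the flex scheme of $C$ as the preimage under the second polar map $g^{2}$ of the discriminant hypersurface of singular conics inside $\Cyc^{1}_{2}(\P^{2}) \cong \P^{5}$, and then compute the degree of this divisor on $C$ using Lemma~\ref{classcalculation}.

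Since $C$ is smooth, $\nabla F$ does not vanish on $C$, so $g^{1}$ is regular; by the earlier theorem $g^{2}$ is therefore regular as well. Lemma~\ref{classcalculation} then gives $(g^{2})^{*}(O(1)) = O_{C}(d-2)$. The classical characterization of flexes says that $\xi \in C$ is a flex if and only if the full $3 \times 3$ projective Hessian of $F$ is singular at $\xi$; equivalently, the conic
\[
g^{2}(\xi) = V\!\left(\sum_{i,j}\frac{\partial^{2}F}{\partial x_{i}\partial x_{j}}(\xi)\,x_{i}x_{j}\right)
\]
is a degenerate element of $\Cyc^{1}_{2}(\P^{2})$. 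Let $\mathcal{D} \subset \P^{5}$ denote the hypersurface of degenerate conics, cut out by the determinant of the $3 \times 3$ symmetric coefficient matrix; this is a cubic, so $[\mathcal{D}] = 3\, H_{\P^{5}}$.

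The flex divisor on $C$ is then $(g^{2})^{-1}(\mathcal{D}) = (g^{2})^{*}\mathcal{D}$, whose degree is computed by pullback:
\[
\deg_{C}\bigl((g^{2})^{*}O_{\P^{5}}(3)\bigr) \;=\; 3\,\deg_{C}O_{C}(d-2) \;=\; 3(d-2)\cdot d \;=\; 3d(d-2),
\]
using that a plane curve of degree $d$ satisfies $\deg_{C}O_{C}(1) = d$.

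The only substantive ingredient is the characterization of flexes as the zeros of the full $3 \times 3$ Hessian determinant; this is the main obstacle, being the one place where local geometry enters. One obtains it by placing $\xi$ at the origin with tangent line $y = 0$, expanding $F$ to low order, and using the Euler relations of Section~2 to reduce the $3 \times 3$ projective Hessian determinant to a scalar multiple of the leading quadratic coefficient along the tangent direction, so that it vanishes precisely when the order of contact of the tangent is at least three. Once this is in hand, the chern-class computation above gives the count $3d(d-2)$ (with intersection multiplicities), which agrees with the classical Bezout computation for the Hessian curve intersected with $C$; generic transversality then upgrades this to $3d(d-2)$ distinct flex points.
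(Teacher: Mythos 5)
Your argument is correct and follows essentially the same route as the paper: both identify the flexes of $C$ as the locus where $g^2$ meets the cubic discriminant $\Delta \subset \P^5$ of singular conics, and both reduce the count to the degree computation coming from Lemma~\ref{classcalculation}. The only cosmetic difference is that you pull the divisor $\Delta$ back to $C$ and compute $\deg_C O_C(3(d-2)) = 3d(d-2)$ directly, whereas the paper pushes forward and applies Bezout to $g^2(C)$ (of degree $d(d-2)$ by Theorem~\ref{pullbackformula}) and $\Delta$ in $\P^5$ --- the same computation via the projection formula.
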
 
\begin{proof}
Let $C$ be defined by a homogeneous polynomial $F$. A point $p$ is a flex if and only 
if the determinant of the Hessian matrix $H_p$ is zero, where
$$H_p =
\begin{pmatrix} 
\frac{\partial^2 F}{\partial x_0^2}(p) & 
          \frac{\partial^2 F}{\partial x_0x_1}(p) &
                      \frac{\partial^2 F}{\partial x_0x_2}(p) \\
\frac{\partial^2 F}{\partial x_0x_1}(p) & 
          \frac{\partial^2 F}{\partial x_1^2}(p) &
                      \frac{\partial^2 F}{\partial x_1x_2}(p) \\
\frac{\partial^2 F}{\partial x_0x_2}(p) & 
          \frac{\partial^2 F}{\partial x_1x_2}(p) &
                      \frac{\partial^2 F}{\partial x_2^2}(p)                       
\end{pmatrix}
$$
but $H_p$ is the quadratic form which defines the second polar map at $p$, i.e.
\begin{equation}\label{flex}
g^2(p) = \xi^T H_p \xi
\end{equation}
Notice that since $C$ is smooth, the second polar map
is regular. So the condition of $p$ being a flex is exactly the same
as $g^2(p)$ being a singular quadric. Now, singular quadrics form a hypersurface $\Delta$
of degree $3$ in the space $\P^5$ of all degree $2$ homogeneous polynomials
in three variables. This hypersurface $\Delta$ is given by the vanishing
of the determinant of the matrix defining a quadratic form.\\

Thus we are interested in computing the number of intersection points of
$g^2(C)$ with $\Delta$. But $\deg(g^2(C))=d(d-2)$ and $\deg(\Delta)=3$, therefore
Bezout's theorem implies that the number of flexes is $3d(d-2)$.  
\end{proof}

The next example shows how it is possible to have a hypersurface $X$ with
degenerate gauss map (the image of the gauss map will be a curve) and
nevertheless the second polar map has the same dimension as the hypersurface.

\begin{remark}
It would be interesting to find examples of hypersurfaces $X$ with degenerate higher
polar images such that $X$ is not a cone. If we define a hypersurface to be {\em 
$k$-defective} if its $k$-th polar map has a lower dimension than $X$ itself, then we
may ask for a charachterization of $k$-defective hypersurfaces ($1$-defective hypersurfaces
coincide with the notion of defective hypersurfaces, i.e. having a dual which is not
a hypersurface).
\end{remark}

\begin{example} Let $\Sigma \in \P^3$ be the rational normal curve. That is, 
$\Sigma$ is the image of the rational parametrization 
$$t \mapsto [1:t:t^2:t^3]$$
It is known that the dual variety $\Sigma^{\spcheck}$ is a hypersurface
defined by the discriminant of the general single variable polynomial of degree $3$
(cf ~\cite{GKZ} ch. 1.), namely, the equation defining the dual hypersurface
is:
$$\Delta = x_1^2x_2^2-4x_1^3x_3^2-27x_0^2x_3^2+18x_0x_1x_2x_3$$
Now, $\Sigma^{\spcheck}$ must necesarilly be singular, since otherwise
the dual variety would be a hypersurface. But the singularities of
$\Sigma^{\spcheck}$ actually have order $1$, therefore the second
polar map is regular and using our calculations we can conclude
that $g^2(\Sigma^{\spcheck})$ is a surface of degree $4(4-2)^2=16$ in
$\P^9$.

\end{example}

\begin{bibsection}
\begin{biblist}

\bib{behesh}{article}{
author={Beheshti, Roya},
title={Lines on projective hypersurfaces},
journal={J. Reine Angew. Math.},
volume={592},
date={2006},
pages={1--21},
issn={0075-4102},
review={\MR{2222727 (2007a:14009)}},
}
\bib{dolga}{article}
{
author={I. Dolgachev},
title={Topics in Classical Algebraic Geometry},
eprint={http://math.lsa.umich.edu/~idolga/topics1.pdf},
}
\bib{cili}{article}{
author={Ciliberto, Ciro},
author={Russo, Francesco},
author={Simis, Aron},
title={Homaloidal hypersurfaces and hypersurfaces with vanishing Hessian},
journal={Adv. Math.},
volume={218},
date={2008},
number={6},
pages={1759--1805},
issn={0001-8708},
review={\MR{2431661}},
}

\bib{GKZ}{book}{
author={Gelfand, I. M.},
author={Kapranov, M. M.},
author={Zelevinsky, A. V.},
title={Discriminants, resultants and multidimensional determinants},
series={Modern Birkh\"auser Classics},
note={Reprint of the 1994 edition},
publisher={Birkh\"auser Boston Inc.},
place={Boston, MA},
date={2008},
pages={x+523},
isbn={978-0-8176-4770-4},
review={\MR{2394437}},
}

\bib{GH}{article}{
author={Griffiths, Phillip},
author={Harris, Joseph},
title={Algebraic geometry and local differential geometry},
journal={Ann. Sci. \'Ecole Norm. Sup. (4)},
volume={12},
date={1979},
number={3},
pages={355--452},
issn={0012-9593},
review={\MR{559347 (81k:53004)}},
}
\bib{Lands}{article}{
author={Landsberg, J. M.},
title={Lines on projective varieties},
journal={J. Reine Angew. Math.},
volume={562},
date={2003},
pages={1--3},
issn={0075-4102},
review={\MR{2011327 (2004i:14063)}},
}

\bib{LM}{article}{
  author={Lawson, H. Blaine, Jr.},
  author={Michelsohn, Marie-Louise},
  title={Algebraic cycles, Bott periodicity, and the Chern characteristic
  map},
  conference={
   title={The mathematical heritage of Hermann Weyl},
   address={Durham, NC},
   date={1987},
  },
  book={
   series={Proc. Sympos. Pure Math.},
   volume={48},
   publisher={Amer. Math. Soc.},
   place={Providence, RI},
  },
  date={1988},
  pages={241--263},
  review={\MR{974339 (90d:14010)}},
}

\end{biblist}
\end{bibsection}

\end{document}